\theoremstyle{cupthm}
\newtheorem{thm}{Theorem}[section]
\newtheorem{prop}[thm]{Proposition}
\newtheorem{cor}[thm]{Corollary}
\theoremstyle{cupdefn}
\newtheorem{defn}[thm]{Definition}
\theoremstyle{cuprem}
\newtheorem{rem}[thm]{Remark}
\numberwithin{equation}{section}
\begin{document}
\runningtitle{Quasi-isometry between two almost contact metric manifolds}
\title{Quasi-isometry between two almost contact metric manifolds}
\author[1]{Arindam Bhattacharyya}
\address[1]{Department of Mathematics, Jadavpur University,
Kolkata-700032, India
\email{arindam.bhattacharyya@jadavpuruniversity.in}}
\author[2]{Dipen Ganguly}
\address[2]{Department of Mathematics, Jadavpur University,
Kolkata-700032, India
\email{dipenganguly1@gmail.com}}
\author[3]{Paritosh Ghosh}
\address[3]{Department of Mathematics, Jadavpur University,
Kolkata-700032, India
\email{paritoshghosh112@gmail.com}}
\author[3]{Sumanjit Sarkar}
\address[3]{Department of Mathematics and Statistics,
Vignan's Foundation for Science, Technology and Research; Guntur, Andhra Pradesh-522213, India 
\email{imsumanjit@gmail.com}}

\authorheadline{A. Bhattacharyya et al}


\support{Author P. Ghosh is financially supported by UGC Junior Research Fellowship of India (Ref. No: 201610010610).\\
Author A. Bhattacharyya is the corresponding author.}

\begin{abstract}
In this paper the notion of quasi-isometry between two Riemannian manifolds has been introduced. This idea is also imposed to study quasi-isometry between two almost contact metric manifolds. Moving further, some curvature properties of two quasi-isometrically embedded almost contact metric manifolds, $N(k)-$contact metric manifolds and Sasakian manifolds are investigated. Next, an illustrative example of a quasi-isometry between two Sasakian structures is constructed. Finally, a relation between the scalar curvature and the quasi-isometric constants for two quasi-isometric Riemannian manifolds has been established.
\end{abstract}

\classification{primary 53C20; secondary 53C25}
\keywords{Quasi-isometry, Almost contact metric manifold, $N(k)-$ Contact metric manifold, Einstein manifold, Sasakian Manifold}

\maketitle

\section{Introduction}
\label{intro}
The notion quasi-isometry was first introduced by the American mathematician G.D. Mostow \cite{Mos} in 1973 and later it was Gromov \cite{Gr1} who studied quasi-isometry to a much further extent in the context of geometric group theory. But Mostow used the term pseudo-isometry and this notion was little bit different from the one that we will be discussing here (See \cite{BH}). Let $(X,d_x)$ and $(Y,d_y)$ be two metric spaces and $f:(X,d_x)\longrightarrow(Y,d_y)$ be a map. Then the map $f$ is said to be an $(L,C)$ quasi-isometric embedding, if there exist constants $L\geq 1$ and $C\geq 0$ such that for all $p,q\in(X,d_x)$
\begin{equation}\label{e1.1}
\frac{1}{L}d_x(p,q)-C\leq d_y(f(p),f(q))\leq Ld_x(p,q)+C.
\end{equation}
Moreover, if the quasi-isometric embedding $f$ has a quasi dense image, i.e if there is a constant $D\geq 0$ such that $\forall y\in Y$, $\exists x\in X$ for which $d_y(f(x),y)\leq D$, then the map $f$ is called a quasi isometry and we call that the two metric spaces  $(X,d_x)$ and $(Y,d_y)$ are quasi-isometric. For example, it can shown that the grid $\mathbb{Z}^2$ with the taxicab metric is quasi-isometric to the plane $\mathbb{R}^2$ with the usual Euclidean metric via the natural inclusion map as a Quasi-isometry \cite{CM}. Also it is easy to see that any metric space of finite diameter is quasi-isometric to a point. In that manner we can say that, all metric spaces of finite diameter are same in the sense of quasi-isometry.\par

We say that a map $f:X\longrightarrow Y$ has finite distance from a map $g:X\longrightarrow Y$ if there is a constant $M\geq 0$ such that, for all $x\in X$ we have $d_x(g(x),f(x))\leq M$ and $f\sim g$ if $f$ is at finite distance from $g$. Then it is easy to check that $'\sim '$ is an equivalence relation. We denote $QI(X)$ be the set of all quasi-isometries from $X\longrightarrow X$, and let $QI(X)/\sim$ be the set of all quasi isometries of X modulo finite distance. Moreover the composition $([f],[g])\mapsto[f\circ g]$ on the set of all  equivalence classes $QI(X)$ forms a group, called the quasi-isometry group of X. It is a major problem in geometric group theory, to find the quasi-isometry groups of spaces.\par

In geometric group theory the main idea is to see how groups can be viewed as geometric objects. To be more precise on which geometric object can act as a group in a 'nice way' so that the interplay between the group and the space reveals the algebraic properties of the group. In this direction one fundamental result is the $\check{S}$varc-Milnor lemma, which says that if a group G acts properly and co-compactly by isometries on a non-empty proper geodesic metric space $(X,d)$, then G is finitely generated and for all $x\in X$ the map $$G\longrightarrow X$$ $$g\mapsto g.x$$ is a quasi-isometry (a metric space is proper if all balls of finite radius are compact in the metric topology and an action of a group G on a topological space X is co-compact if the quotient space $X/G$ is compact with respect to the quotient topology).\par

One of the central theorems in geometric group theory is Gromov's polynomial growth theorem (See \cite{Gr1}), which says that finitely generated groups have polynomial growth if and only if they are virtually nilpotent (i.e if the group has a subgroup of finite index that is nilpotent). Then using this theorem an interesting result can be proved that, if a group G is quasi-isometric to $\mathbb{Z}^n$ then G has a subgroup of finite index which is isomorphic to $\mathbb{Z}^n$.\par

In Riemannian Geometry, two Riemannian manifolds $(M_1^{m_1},g_1)$ and $(M_2^{m_2},g_2)$ are said to be isometric if there exists a  diffeomorphism $f:M_1\to M_2$ such that $g_2(f_*X,f_*Y)=g_1(X,Y)$ for all $X,Y\in\chi(M_1)$, where $f_*:\chi(M_1)\to\chi(M_2)$ is the differential of $f$, where $\chi(M_1)$ and $\chi(M_2)$ are set of all vector fields of $M_1$ and $M_2$ respectively. Such a map $f$ is called isometry. This motivates us to define quasi-isometry between two Riemannian manifolds.
\begin{defn}\label{def1}
Let $(M_1^{m_1},g_1)$ and $(M_2^{m_2},g_2)$ be two Riemannioan manifolds of respective dimensions $m_1$ and $m_2$. Let $\chi(M_1)$ and $\chi(M_2)$ be the set of all vector fields associated to $M_1$ and $M_2$ respectively. A diffeomorphism $f:M_1^{m_1}\to M_2^{m_2}$ is said to be a quasi-isometric embedding between $M_1$ and $M_2$ if there exist constants $A\geq1, B\geq0$ such that for all $X,Y\in\chi(M_1)$,
\begin{equation}\label{e1.2}
  \frac{1}{A}g_1(X,Y)-B \leq g_2(f_*(X),f_*(Y))\leq Ag_1(X,Y)+B.
\end{equation}
Moreover, if for all $Z\in\chi(M_2)$ there exists $X\in\chi(M_1)$ and a constant $D\geq0$ such that
\begin{equation}\label{e1.3}
g_2(Z,f_*(X))\leq D,
\end{equation}
then $f$ is called quasi-isometry between the manifolds $M_1$ and $M_2$.\\
The two manifolds $M_1$ and $M_2$ are called quasi-isometric if there exists such a quasi-isometry $f$ between $M_1$ and $M_2$.
\end{defn}
The definition given in \eqref{e1.1} is based on usual metric of the metric space whereas in Definition \ref{def1} we have considered the Riemannian metric for the inequalities, which is more generalized form than the usual metric $d$.\par
In this paper we have introduced the concept of quasi-isometry for almost contact metric manifolds, for $N(k)-$contact metric manifolds and for Sasakian manifolds of same dimensions and established some inequalities between two quasi-isometric metric manifolds for various cases like when the ambient manifold is conformally flat, concircularly flat, etc. We have given an example of a quasi-isometry between two Sasakian manifolds. And finally we find a relationship between the scalar curvature and the quasi-isometric constants for two Riemannian manifolds to be quasi-isometric.

\section{Preliminaries}\label{sec2}

A \emph{contact manifold} $M^{2n+1}$ is a $C^\infty$ manifold together with a global 1-form $\eta$ such that $\eta \wedge (d\eta)^n \neq 0$. More specifically, $\eta \wedge (d\eta)^n $ is a volume element on M, which is non-zero everywhere on $M^{2n+1}$ so that the manifold M is orientable.\par

Let $M^{2n+1}$ be a $(2n+1)$ dimensional manifold and let there exist a $(1,1)$ tensor field $\phi$, a vector field $\xi$ and a global 1-form $\eta$ on M such that
\begin{eqnarray}
   \phi ^2 &=& -I + \eta \otimes \xi,\label{e2.1}\\
  \eta (\xi ) &=& 1\label{e2.2},
\end{eqnarray}
then we say that M has an \emph{almost contact structure} $(\phi,\xi,\eta)$. And the manifold M equipped with this almost contact structure $(\phi,\xi,\eta)$ is called an \emph{almost contact manifold} (See \cite{Bla}). \par
Here the vector field $\xi$ is called the \emph{characteristic vector field} or \emph{Reeb vector field}.\par

\begin{prop}
 \cite{Bla} For an almost contact structure $(\phi,\xi,\eta)$ the following relations hold:
\begin{eqnarray}
  \phi\circ\xi &=& 0,\label{e2.3}\\
  \eta\circ\phi &=& 0, \label{e2.4}\\
  Rank\phi &=& 2n\label{e2.5}.
\end{eqnarray}
\end{prop}

\begin{thm}
\cite{Bla} Every almost contact structure $(\phi,\xi,\eta)$ on a manifold $M^{2n+1}$ admits a Riemannian metric $g$ satisfying:
  \begin{eqnarray}
    \eta(X) &=& g(X,\xi),\label{e2.6}\\
    g(\phi X,\phi Y) &=& g(X,Y)-\eta(X)\eta(Y)\label{e2.7}.
  \end{eqnarray}
\end{thm}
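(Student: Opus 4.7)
The plan is to manufacture the desired metric by a two-stage ``averaging'' procedure starting from an arbitrary Riemannian metric supplied by a partition of unity, and then to use the identities in Result 2.1 to check the two compatibility relations.

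First I would invoke paracompactness of $M^{2n+1}$ to obtain any Riemannian metric $h$. This $h$ need not interact with the tensor $\phi$ or the form $\eta$ at all, so a correction is needed so that $\xi$ becomes a unit vector dual to $\eta$. The natural first correction is
\begin{equation*}
h_1(X,Y) := h(\phi^2 X, \phi^2 Y) + \eta(X)\eta(Y).
\end{equation*}
Using $\phi\xi=0$ and $\eta(\xi)=1$ (Result 2.1 and equation (2.2)), a direct check gives $h_1(X,\xi)=\eta(X)$ and $h_1(\xi,\xi)=1$. Positive-definiteness of $h_1$ follows because $h_1(X,X)=0$ forces both $\phi^2 X=0$ and $\eta(X)=0$; but $\phi^2 X=-X+\eta(X)\xi$ by (2.1), so $X=\eta(X)\xi=0$.

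Next I would symmetrize with respect to $\phi$ by setting
\begin{equation*}
g(X,Y) := \tfrac{1}{2}\bigl[h_1(X,Y)+h_1(\phi X,\phi Y)+\eta(X)\eta(Y)\bigr].
\end{equation*}
The first required relation is immediate: since $h_1(\phi X,\phi\xi)=0$ and $h_1(X,\xi)=\eta(X)$, one gets $g(X,\xi)=\eta(X)$. The second relation reduces, after substituting $\phi X,\phi Y$ and using $\eta\circ\phi=0$ from (2.5), to the identity
\begin{equation*}
h_1(\phi^2 X,\phi^2 Y)=h_1(X,Y)-\eta(X)\eta(Y),
\end{equation*}
and this is the only computational step with any content: expand $\phi^2 X=-X+\eta(X)\xi$, expand the same for $Y$, and collect terms using $h_1(X,\xi)=\eta(X)$ and $h_1(\xi,\xi)=1$; the cross terms cancel and yield the claim.

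Finally I would verify that $g$ is still a Riemannian metric. Symmetry is clear. For positive-definiteness, $g(X,X)\ge \tfrac12 h_1(X,X)\ge 0$, with equality forcing $h_1(X,X)=0$, hence $X=0$ by the previous step. The only mild obstacle is really the identity in the displayed equation above, but because $\phi^2$ acts as $-I$ on the kernel of $\eta$ and annihilates the direction of $\xi$, the symmetrization is designed precisely so that this identity holds automatically; once it is in hand, both compatibility conditions (2.7) and (2.8) drop out with no further choice.
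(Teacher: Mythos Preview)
Your argument is correct and is in fact the standard proof found in Blair's monograph. Note, however, that the paper does not supply its own proof of this statement: it is quoted in the Preliminaries with the citation \cite{Bla} and no argument is given. Hence there is nothing to compare against beyond observing that your two-step averaging (first pass to $h_1(X,Y)=h(\phi^2X,\phi^2Y)+\eta(X)\eta(Y)$ to force $h_1(\cdot,\xi)=\eta$, then symmetrize over $\phi$) is exactly the construction Blair uses, and the key identity $h_1(\phi^2X,\phi^2Y)=h_1(X,Y)-\eta(X)\eta(Y)$ is verified correctly via $\phi^2=-I+\eta\otimes\xi$ together with $h_1(X,\xi)=\eta(X)$ and $h_1(\xi,\xi)=1$.
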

And the metric $g$ is called \emph{compatible} with the almost contact structure $(\phi,\xi,\eta)$ and the manifold $M^{2n+1}$ with the almost contact metric structure $(\phi,\xi,\eta,g)$ is called an \emph{almost contact metric manifold}.\par

In 1988, S. Tanno \cite{ST} introduced the notion of $k-$nullity distribution on a contact metric manifold which is defined as follows: The $k-$nullity distribution of a Riemannian manifold $(M,g)$ for a real number k is a distribution,
\begin{equation}
N(k): p\longrightarrow N_p(k)=[Z\in T_pM: R(X, Y)Z=k\{g(Y, Z)X- g(X, Z)Y\}],\label{e2.8}
\end{equation}
for any $X, Y\in T_pM$, where $R$ is the \emph{Riemannian curvature tensor} and $T_pM$ denotes the tangent vector space of $M^{2n+1}$ at point $p\in M$.\par \medskip
If the characteristic vector field of a contact metric manifold belongs to the $k-$nullity distribution, then the relation,
\begin{equation}\label{e2.9}
R(X, Y)\xi=k[\eta(Y)X-\eta(X)Y]
\end{equation}
holds. A contact metric manifold with $\xi \in N(k)$ is called a \emph{$N(k)-$contact metric manifold}.\par

\begin{prop}
\cite{Bla} Let $M^{2n+1}(\phi,\xi,\eta, g) (n\geq2)$ be a $N(k)-$contact metric manifold. Then the following relations hold:
\begin{eqnarray}
   Q\xi &=& (2nk)\xi, \label{e2.10}\\
   S(X, \xi) &=& 2nk\eta(X), \label{e2.11}\\
   \eta(R(X,Y)Z) &=& k[\eta(X)g(Y,Z)-\eta(Y)g(X,Z)],\label{e2.12}
\end{eqnarray}
where, $R$ is the Riemannian curvature tensor, $S$ is the Ricci tensor of type $(0,2)$ and $Q$ is the Ricci operator or the symmetric endomorphism of the tangent space $T_pM$ at the point $p\in M$ and is given by $S(X, Y)=g(QX, Y)$.
\end{prop}

Next we recall a very important manifold named Sasakian manifold which was introduced by the Japanese mathematician S. Sasaki \cite{Sa} in the year 1960. Later, the works of Boyer, Galicki \cite{BG} and other mathematicians have made a substantial progress in the study of Sasakian manifolds. In mathematical physics Sasakian manifolds and more specifically Sasakian space forms are widely used. Sasakian manifolds or normal contact metric manifolds are an odd-dimensional counterpart of the K\"{a}hler manifolds in complex geometry.\par
An almost contact manifold $M^{2n+1}$ together with the almost contact structure $(\phi,\xi,\eta)$ is said to be a \emph{Sasakian manifold} or a \emph{normal contact metric manifold} if
\begin{equation*}
[\phi,\phi](X,Y)+2d\eta(X,Y)\xi=0,
\end{equation*}
where, $[\phi,\phi]$ is the \emph{Nijenhuis torsion tensor field} of $\phi$ and is given by,
\begin{equation*}
[\phi,\phi](X,Y)=\phi^2[X,Y]+[\phi X,\phi Y]-\phi([\phi X,Y])-\phi([X,\phi Y]).
\end{equation*}
\begin{thm}
An almost contact metric manifold $M^{2n+1}$ with the structure $(\phi,\xi,\eta,g)$ is Sasakian if and only if
\begin{equation*}
(\nabla_{X}\phi)Y=g(X,Y)\xi -\eta(Y)X,
\end{equation*}
where, $\nabla$ is the Levi-Civita connection on $M^{2n+1}$ (See \cite{Bla}).
\end{thm}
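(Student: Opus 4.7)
The plan is to prove the two implications of the biconditional separately, since they rely on quite different machinery. The converse direction (assume the formula, derive the normality condition) is essentially a direct bracket computation, whereas the forward direction (assume Sasakian, derive the formula) is best handled through the standard Koszul-type identity for $\nabla\phi$ on an almost contact metric manifold.

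For the converse direction, I would first extract the companion identity $\nabla_{X}\xi = -\phi X$ by specialising the hypothesis $(\nabla_{X}\phi)Y = g(X,Y)\xi - \eta(Y)X$ to $Y = \xi$ and using $\phi\xi = 0$ together with $(\nabla_{X}\phi)\xi = -\phi(\nabla_{X}\xi)$. Combined with $\eta(Z) = g(Z,\xi)$ and torsion-freeness of $\nabla$, the Cartan formula $2\,d\eta(X,Y) = X\eta(Y) - Y\eta(X) - \eta([X,Y])$ then collapses to $d\eta(X,Y) = g(X,\phi Y)$, recovering the contact metric condition. Next, expand
\[
[\phi,\phi](X,Y) = \phi^{2}[X,Y] + [\phi X,\phi Y] - \phi([\phi X,Y]) - \phi([X,\phi Y]),
\]
rewrite each Lie bracket as $[U,V] = \nabla_{U}V - \nabla_{V}U$, and convert $\nabla_{U}(\phi V)$ into $(\nabla_{U}\phi)V + \phi(\nabla_{U}V)$. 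Substituting the hypothesised formula for $(\nabla_{U}\phi)V$, invoking $\phi^{2} = -I + \eta\otimes\xi$, and collecting terms, all Levi-Civita terms cancel and only $-2g(X,\phi Y)\xi = -2d\eta(X,Y)\xi$ survives, which is exactly the required normality relation.

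For the forward direction, I would appeal to the general Koszul-type identity valid on any almost contact metric manifold,
\[
\begin{aligned}
2g((\nabla_{X}\phi)Y,Z) &= 3d\Phi(X,\phi Y,\phi Z) - 3d\Phi(X,Y,Z) + g(N^{(1)}(Y,Z),\phi X)\\
&\quad + N^{(2)}(Y,Z)\eta(X) + 2d\eta(\phi Y,X)\eta(Z) - 2d\eta(\phi Z,X)\eta(Y),
\end{aligned}
\]
where $\Phi(X,Y) := g(X,\phi Y)$ is the fundamental $2$-form and $N^{(1)}, N^{(2)}$ are the usual Nijenhuis-type tensors. On a Sasakian structure one has $\Phi = d\eta$, whence $d\Phi \equiv 0$ automatically, and the tensors $N^{(1)}, N^{(2)}, N^{(3)}, N^{(4)}$ all vanish (the first by hypothesis, the rest being consequences in the contact metric setting). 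Feeding these vanishings into the identity and simplifying $d\eta(\phi Y,X) = g(\phi Y,\phi X) = g(Y,X) - \eta(Y)\eta(X)$ via the compatibility relation in Theorem~2.1, the whole expression telescopes to $2g(X,Y)\eta(Z) - 2\eta(Y)g(X,Z)$, and dualising through $g$ yields precisely $(\nabla_{X}\phi)Y = g(X,Y)\xi - \eta(Y)X$.

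The principal obstacle is the derivation and bookkeeping of the Koszul-type identity used in the forward direction: it is obtained by polarising the relations $3d\Phi(X,Y,Z) = \mathfrak{S}\bigl\{Xg(Y,\phi Z) - g([X,Y],\phi Z)\bigr\}$ and the Nijenhuis definitions, and requires careful use of the symmetry $g(\phi U,V) = -g(U,\phi V)$ at every step. Once that identity is in hand, both directions reduce to routine—though term-heavy—substitution, and I would only expand what is strictly necessary rather than grinding through every cross term.
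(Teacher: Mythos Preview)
The paper does not actually prove this statement: it appears in the Preliminaries section as a quoted background result with a reference to Blair \cite{Bla}, and no argument is supplied. Your proposal is correct and is essentially the standard proof one finds in Blair---the forward direction via the Koszul-type identity for $2g((\nabla_X\phi)Y,Z)$ (Blair's Lemma~6.1) specialised using $d\Phi=0$ and the vanishing of $N^{(1)},N^{(2)}$, and the converse via direct expansion of $[\phi,\phi]+2d\eta\otimes\xi$ through $\nabla$. One small point worth making explicit in the converse step: from $-\phi(\nabla_X\xi)=\eta(X)\xi-X=\phi^2X$ you only get $\phi(\nabla_X\xi+\phi X)=0$, so you need the extra observation $\eta(\nabla_X\xi)=\tfrac12X\,g(\xi,\xi)=0$ to conclude $\nabla_X\xi=-\phi X$; otherwise the argument is complete. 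In short, there is nothing in the paper to compare against---your write-up supplies exactly the proof the authors chose to cite rather than reproduce.
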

\begin{prop}
\cite{Bla} Let $M^{2n+1}$ be a Sasakian manifold with the structure $(\phi,\xi,\eta,g)$, then the following relations are true:
\begin{eqnarray}
  \nabla_X\xi &=& -\phi X, \label{e2.13}\\
  R(X,Y)\xi &=& \eta(Y)X-\eta(X)Y, \label{e2.14}\\
  R(X,\xi)Y &=& \eta(Y)X-g(X,Y)\xi, \label{e2.15}\\
  \eta(R(X,Y)Z) &=& \eta(X)g(Y,Z)-\eta(Y)g(X,Z),\label{e2.16}
\end{eqnarray}
where, $R$ is the Riemannian curvature tensor of $M^{2n+1}$ and is given by,
\begin{equation*}
R(X,Y)Z=\nabla_X\nabla_Y Z-\nabla_Y\nabla_X Z-\nabla_{[X,Y]}Z,
\end{equation*}
for all vector fields X,Y,Z on M.
\end{prop}
The theorems and results that are stated above will be used frequently in proofs of the next chapters. For a detailed discussion and proofs of these we refer to the text \cite{Bla}.

\section{Quasi-isometry between two almost contact metric manifolds}
Consider two odd dimensional almost contact metric manifolds $M_1$ and $M_2$ with the structure $(\phi_1,\xi_1,\eta_1,g_1)$ and $(\phi_1,\xi_1,\eta_1,g_1)$ respectively. In this section, we study quasi-isometry between two almost contact metric manifolds $M_1$ and $M_2$.
 
As in \eqref{e1.2}, for all $X, Y \in \chi(M_1)$, we have,
\begin{equation*}
 \qquad \frac{1}{A}g_1(X,Y)-B\leq g_2(f_*(X),f_*(Y))\leq Ag_1(X,Y)+B.
\end{equation*}
For $Y=\xi_1$, we get using \eqref{e2.6},
\begin{eqnarray*}
  \frac{1}{A}\eta_1(X)-B\leq &g_2(f_*(X), f_*(\xi_1))&\leq A\eta_1(X)+B.
\end{eqnarray*}
\par If the function $f_*$ preserves the structure vector field between the two manifolds $M_1$ and $M_2$, that is, if $f_*(\xi_1)=\xi_2$, then $$g_2(f_*(X), f_*(\xi_1))=g_2(f_*(X), \xi_2)=\eta_2(f_*(X)),$$ so that,
\begin{equation}\label{e3.1}
\frac{1}{A}\eta_1(X)-B\leq \eta_2(f_*(X))\leq A\eta_1(X)+B, \quad \forall X\in \chi(M_1).
\end{equation}
\par Since the tensor field $\phi$ is anti-symmetric with respect to the Riemannian metric $g$, that is,
\begin{equation*}
  g(\phi X, Y) = -g(X, \phi Y),
\end{equation*}
we have,
\begin{equation*}
  \quad g_1(\phi_1 X, X) = 0.
\end{equation*}
So, replacing $X$ by $\phi_1X$, we get from \eqref{e1.2},
  $$\frac{1}{A}g_1(\phi_1X,Y)-B\leq g_2(f_*(\phi_1X),f_*(Y))\leq Ag_1(\phi_1X,Y)+B.$$
Again for $Y=X$,
\begin{equation}\label{e3.2}
-B\leq g_2(f_*(\phi_1X), f_*(X))\leq B, \quad \forall X\in \chi(M_1).
\end{equation}
Now replacing $X$ by $\phi_1X$ and $Y$ by $\phi_1Y$ we get from \eqref{e1.2},
$$\frac{1}{A}g_1(\phi_1X,\phi_1Y)-B\leq g_2(f_*(\phi_1X),f_*(\phi_1Y))\leq Ag_1(\phi_1X,\phi_1Y)+B.$$
The left inequality of the above implies
\begin{equation}\label{e3.3}
\frac{1}{A}g_1(X,Y)-B\leq g_2(f_*(\phi_1X),f_*(\phi_1Y))+\frac{1}{A}\eta_1(X)\eta_1(Y).
\end{equation}
Similarly, the right inequality gives
\begin{equation}\label{e3.4}
 g_2(f_*(\phi_1X),f_*(\phi_1Y))+A\eta_1(X)\eta_1(Y)\leq Ag_1(X,Y)+B.
\end{equation}
Since $A\geq1$, we have
\begin{equation}\label{e3.5}
 A\geq \frac{1}{A}.
\end{equation}
Using \eqref{e3.5}, from \eqref{e3.3} and \eqref{e3.4}, we get for all $X, Y\in \chi(M_1)$,
\begin{equation}\label{e3.6}
\frac{1}{A}g_1(X,Y)-B\leq g_2(f_*(\phi_1X),f_*(\phi_1Y))+\frac{1}{A}\eta_1(X)\eta_1(Y)\leq Ag_1(X, Y)+B.
\end{equation}
Replacing $X$ by $\phi_1X$ and $Y$ by $\phi_1Y$, the above implies
\begin{equation*}
\frac{1}{A}g_1(X,Y)-B\leq g_2(f_*({\phi_1}^2X),f_*({\phi_1}^2Y))+\frac{1}{A}\eta_1(X)\eta_1(Y)\leq Ag_1(X, Y)+B.
\end{equation*}
\par Now using the linearity of the differential $f_*$ and \eqref{e2.1} and also if $f_*$ preserves the structure vector field between the two manifolds, a simple calculation leads to
\begin{eqnarray*}
  \frac{1}{A}g_1(X,Y)-B \leq g_2(f_*(X),f_*(Y))-\eta_1(X)g_2(\xi_2,f_*(Y))\\
  -\eta_1(Y)g_2(f_*(X),\xi_2)+\eta_1(X)\eta_1(Y)(g_2(\xi_2,\xi_2)+\frac{1}{A})\leq Ag_1(X, Y)+B,
\end{eqnarray*}
which implies
\begin{eqnarray}\label{e3.7}
  \frac{1}{A}g_1(X,Y)-B \leq g_2(f_*(X),f_*(Y))-\eta_1(X)\eta_2(f_*(Y)) \nonumber \\
  -\eta_1(Y)\eta_2(f_*(X))+\eta_1(X)\eta_1(Y)(1+\frac{1}{A})\leq Ag_1(X, Y)+B.
\end{eqnarray}
So collecting all these results, we can state that:
\begin{thm}
Let $M_1(\phi_1,\xi_1,\eta_1,g_1)$ and $M_2(\phi_2,\xi_2,\eta_2,g_2)$ be two odd dimensional almost contact metric manifolds and let $f:M_1\to M_2$ be a quasi-isometric embedding. Also consider that $f_*$ preserves the structure vector field between the two manifolds. Then for all $X, Y\in\chi(M_1)$, the following relations hold:
\begin{enumerate}
  \item $\frac{1}{A}\eta_1(X)-B \leq \eta_2(f_*(X)) \leq A\eta_1(X)+B,$
  \item $-B \leq g_2(f_*(\phi_1X), f_*(X)) \leq B,$
  \item $\frac{1}{A}g_1(X,Y)-B \leq g_2(f_*(\phi_1X),f_*(\phi_1Y))+\frac{1}{A}\eta_1(X)\eta_1(Y) \leq Ag_1(X, Y)+B,$
  \item $\frac{1}{A}g_1(X,Y)-B \leq g_2(f_*(X),f_*(Y))-\eta_1(X)\eta_2(f_*(Y))-\eta_1(Y)\eta_2(f_*(X))+\eta_1(X)\eta_1(Y)(1+\frac{1}{A})\leq Ag_1(X, Y)+B.$
\end{enumerate}
\end{thm}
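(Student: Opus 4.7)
The plan is to derive each of the four inequalities by making an appropriate substitution into the defining quasi-isometric inequality (3.1) and invoking the compatibility conditions (2.6), (2.7) together with the structure-preserving hypothesis $f_*(\xi_1)=\xi_2$. All four claims live inside the same two-sided inequality, so the argument is essentially a sequence of substitutions rather than a new estimate.

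For (1), I would set $Y=\xi_1$ in (3.1). The left-hand side becomes $\tfrac{1}{A}g_1(X,\xi_1)-B=\tfrac{1}{A}\eta_1(X)-B$ by (2.6), and on the right-hand side I use $f_*(\xi_1)=\xi_2$ together with (2.6) in $M_2$ to rewrite $g_2(f_*(X),\xi_2)=\eta_2(f_*(X))$. For (2), I would use the anti-symmetry of $\phi_1$ with respect to $g_1$ (which gives $g_1(\phi_1 X,X)=0$), then substitute $X\mapsto \phi_1 X$ in (3.1) and set $Y=X$; the middle term survives, the outer terms collapse to $\pm B$.

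For (3), I substitute $X\mapsto \phi_1 X$ and $Y\mapsto \phi_1 Y$ into (3.1). By (2.7), $g_1(\phi_1 X,\phi_1 Y)=g_1(X,Y)-\eta_1(X)\eta_1(Y)$, and rearranging the $\eta_1(X)\eta_1(Y)$ term to the middle produces the stated double inequality once one notes $A\geq 1/A$ so that the same coefficient $1/A$ can be used on the left without weakening the bound on the right (this monotonicity step is already recorded as (3.7) in the text). For (4), I start from (3) and perform a further substitution $X\mapsto \phi_1 X$, $Y\mapsto \phi_1 Y$; now the argument $f_*(\phi_1^2 X)$ is expanded via (2.1) as $f_*(-X+\eta_1(X)\xi_1)=-f_*(X)+\eta_1(X)\xi_2$ using linearity of $f_*$ and the structure-preserving hypothesis. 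Expanding $g_2\bigl(-f_*(X)+\eta_1(X)\xi_2,\,-f_*(Y)+\eta_1(Y)\xi_2\bigr)$ bilinearly and replacing $g_2(f_*(X),\xi_2)$ by $\eta_2(f_*(X))$ (as in (1)) produces exactly the middle expression of (4), together with an $\eta_1(X)\eta_1(Y)\,g_2(\xi_2,\xi_2)=\eta_1(X)\eta_1(Y)$ contribution that combines with the preexisting $\tfrac{1}{A}\eta_1(X)\eta_1(Y)$ from (3) to give the factor $(1+\tfrac{1}{A})$.

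No step is genuinely hard; the only place where care is needed is in (4), where the bilinear expansion of $g_2(f_*(\phi_1^2 X),f_*(\phi_1^2 Y))$ produces four terms that must be reorganized cleanly using $f_*(\xi_1)=\xi_2$, the identity $g_2(\xi_2,\xi_2)=1$ from (2.6), and the definition $\eta_2(f_*(X))=g_2(f_*(X),\xi_2)$ so that the bounds remain a simple $A$-scaling of $g_1(X,Y)$. Once this bookkeeping is done, each of the four conclusions in the theorem drops out as a direct rewriting of (3.1), and the proof terminates.
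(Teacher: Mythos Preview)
Your proposal is correct and follows essentially the same route as the paper: the same substitutions $Y=\xi_1$, $X\mapsto\phi_1X$, and $(X,Y)\mapsto(\phi_1X,\phi_1Y)$ into (3.1), the same use of (2.6)--(2.7), the same monotonicity step $A\geq 1/A$ recorded as (3.7), and the same bilinear expansion of $g_2(f_*(\phi_1^2X),f_*(\phi_1^2Y))$ via (2.1) and $f_*(\xi_1)=\xi_2$. The only cosmetic point is that in item~(4) the $\tfrac{1}{A}\eta_1(X)\eta_1(Y)$ term does not literally survive the substitution (since $\eta_1\circ\phi_1=0$) but is regenerated by reapplying the (3.5)--(3.8) rearrangement to the outer bounds; your description blurs this slightly, but the computation you outline is exactly the paper's.
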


\section{Quasi-isometry between two $N(k)-$contact metric manifolds}
In this section we deal with the quasi-isometry between two $N(k)-$ contact metric manifolds in a similar way and establish some interesting results.
 \par Recall that if a transformation does not change the angle between the tangent vectors of a manifold, it is called a conformal transformation. The Weyl conformal curvature tensor C of a Riemannian manifold $(M,g)$ of dimension $2n+1$ $(n\geq1)$ is an invariant under any conformal transformation of the metric $g$ and it is defined by
\begin{eqnarray}\label{e4.1}
 &C(X,Y)Z=R(X,Y)Z-\frac{1}{(2n-1)}[S(Y,Z)X-S(X,Z)Y+g(Y,Z)QX\nonumber\\
 &-g(X,Z)QY]+\frac{r}{2n(2n-1)}[g(Y,Z)X-g(X,Z)Y],
\end{eqnarray}
 where $R$ is \emph{ Riemannian curvature tensor}, $S$ is the \emph{Ricci tensor of type $(0,2)$}, $Q$ is the \emph{Ricci operator} given by $S(X,Y)=g(QX,Y)$, and $r$ is the \emph{scalar curvature} of the manifold $M$.\par
Next, let the manifold $M_1$ be \emph{conformally flat} i.e; $C_1(X,Y)Z=0$ for all $X,Y,Z\in\chi(M_1)$. Then from the equation \eqref{e4.1} we get,
\begin{eqnarray}\label{e4.2}
R_1(X,Y)Z&=\frac{1}{(2n-1)}[S_1(Y,Z)X-S_1(X,Z)Y+g_1(Y,Z)Q_1X\nonumber\\
&-g_1(X,Z)Q_1Y]-\frac{r}{2n(2n-1)}[g_1(Y,Z)X-g_1(X,Z)Y].
\end{eqnarray}
Putting $Z=\xi_1$ and using \eqref{e2.8},\eqref{e2.9} and the relation $S_1(X, \xi_1)=2nk\eta_1(X)$, we get after some calculations
\begin{equation}\label{e4.3}
R_1(X,Y)\xi_1=\frac{2nk}{r-2nk}[\eta_1(Y)Q_1X-\eta_1(X)Q_1Y].
\end{equation}
And for $Y=\xi_1$,
\begin{equation}\label{e4.4}
Q_1X=(\frac{r-2nk}{2n})X+[(2n+1)k-\frac{r}{2n}]\eta_1(X)\xi_1.
\end{equation}
Now since $R_1(X,Y)Z\in\chi(M_1)$, for all $X,Y,Z,W\in\chi(M_1)$, the left side inequality of \eqref{e1.2} implies
\begin{equation}\label{e4.5}
\frac{1}{A}g_1(R_1(X,Y)Z,W)-B\leq g_2(f_*(R_1(X,Y)Z),f_*(W)).
\end{equation}
Using \eqref{e4.2}, the above inequality becomes
\begin{eqnarray}\label{e4.6}
 &\frac{1}{A}[\frac{1}{(2n-1)}\{S_1(Y,Z)g_1(X,W)-S_1(X,Z)g_1(Y,W)+\nonumber\\
 &g_1(Y,Z)g_1(Q_1X,W)-g_1(X,Z)g_1(Q_1Y,W)\}-\frac{r}{2n(2n-1)}\{g_1(Y,Z)\nonumber\\
 &g_1(X,W)-g_1(X,Z)g_1(Y,W)\}]-B \leq g_2(f_*(R_1(X,Y)Z),f_*(W)).
\end{eqnarray}
For $Z=\xi_1$, the above gives
\begin{eqnarray}\label{e4.7}
&\frac{1}{A}[\frac{1}{(2n-1)}\{2nk\eta_1(Y)g_1(X,W)-2nk\eta_1(X)g_1(Y,W)\nonumber\\
&+\eta_1(Y)S_1(X,W)-\eta_1(X),S_1(Y,W)\}-\frac{r}{2n(2n-1)}\{\eta_1(Y)\nonumber\\
&g_1(X,W)-\eta_1(X)g_1(Y,W)\}]-B\leq g_2(f_*(R_1(X,Y)\xi_1),f_*(W)).
\end{eqnarray}
Simplifying after some steps and assuming $[\frac{1}{(2n-1)}(2nk-\frac{r}{2n})]=l_1$ and $\frac{1}{(2n-1)}=l_2$ we get
\begin{eqnarray}\label{e4.8}
&\frac{1}{A}[l_1\{\eta_1(Y)g_1(X,W)-\eta_1(X)g_1(Y,W)\}+l_2\{\eta_1(Y)S_1(X,W)\nonumber\\
&-\eta_1(X)S_1(Y,W)\}]-B\leq g_2(f_*(R_1(X,Y)\xi_1),f_*(W)).
\end{eqnarray}
Setting $Y=Z=\xi_1$ in \eqref{e4.2}, it can be shown that conformally flat $N(k)-$ contact metric manifold $M_1$ becomes $\eta$-Einstein manifold, that is, $$S_1(X,Y)=ag_1(X,Y)+b\eta_1(X)\eta_1(Y),$$ where, $a=[\frac{r}{2n}-k]$ and $b=[(2n+1)k-\frac{r}{2n}]$. Then putting this value of $S_1$ in \eqref{e4.8} and after simplification we have
\begin{eqnarray}\label{e4.9}
&\frac{1}{A}[l_1\{\eta_1(Y)g_1(X,W)-\eta_1(X)g_1(Y,W)\}+l_2\{\eta_1(Y)(ag_1(X,W)\nonumber\\
&+b\eta_1(X)\eta_1(W))-\eta_1(X)(ag_1(Y,W)+b\eta_1(Y)\eta_1(W))\}]\nonumber\\
&-B\leq g_2(f_*(R_1(X,Y)\xi_1),f_*(W)).
\end{eqnarray}
Now, using \eqref{e2.12} and observing that $(l_1+al_2)=k$, the above inequality becomes
\begin{equation}\label{e4.10}
\frac{1}{A}\eta_1(R_1(Y,X)W)-B\leq g_2(f_*(R_1(X,Y)\xi_1),f_*(W)).
\end{equation}
Reminding the linearity of $f_*$ and using the relation \eqref{e2.9}, the last inequality leads to
\begin{equation}\label{e4.11}
\frac{1}{A}\eta_1(R_1(Y,X)W)-B\leq k[\eta_1(Y)g_2(f_*(X),f_*(W))-\eta_1(X)g_2(f_*(Y),f_*(W))].
\end{equation}
Similarly, taking the right side inequality of the \eqref{e1.2} and proceeding as above we get 
\begin{equation}\label{e4.12}
k[\eta_1(Y)g_2(f_*(X),f_*(W))-\eta_1(X)g_2(f_*(Y),f_*(W))]\leq A\eta_1(R_1(Y,X)W)]+B.
\end{equation}
So, combining the inequalities \eqref{e4.11} and \eqref{e4.12}, we can write
\begin{thm}
Let $M_1(\phi_1,\xi_1,\eta_1,g_1)$ and $M_2(\phi_2,\xi_2,\eta_2,g_2)$ be two odd dimensional $N(k)-$contact metric manifolds with $\operatorname{dim}M_1=2n+1$ $(n>1)$. Suppose $f:M_1\to M_2$ be a quasi-isometric embedding with the constants $A\geq1, B\geq0$. Furthermore, if the manifold $M_1$ is conformally flat, then for all $X,Y,W\in\chi(M_1)$, the metric $g_2$ of the manifold $M_2$ satisfies
\begin{eqnarray}\label{e4.13}
\frac{1}{A}\eta_1(R_1(Y,X)W)-B\leq k[\eta_1(Y)g_2(f_*(X),f_*(W))\nonumber\\
-\eta_1(X)g_2(f_*(Y),f_*(W))]\leq A\eta_1(R_1(Y,X)W)]+B,
\end{eqnarray}
where $R_1$ is the Riemannian curvature tensor of the manifold $M_1$.
\end{thm}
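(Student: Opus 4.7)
The plan is to start from the defining quasi-isometry inequality (3.1), specialize it to the pair $(R_1(X,Y)\xi_1, W) \in \chi(M_1) \times \chi(M_1)$, and then exploit the conformally flat $N(k)$-contact structure of $M_1$ to rewrite both sides purely in terms of $\eta_1$, $g_1$, and the scalar $k$. Once this is done, the target expression in the middle of (4.13) will emerge on the $M_2$-side via the $N(k)$-identity (2.9) and linearity of $f_*$, while the $M_1$-side will reduce to $\eta_1(R_1(Y,X)W)$ via the curvature identity (2.12).

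More concretely, I would first apply the left inequality of (3.1) to the pair $(R_1(X,Y)Z, W)$, obtaining the lower bound $\frac{1}{A} g_1(R_1(X,Y)Z, W) - B \leq g_2(f_*(R_1(X,Y)Z), f_*(W))$. The conformally flat assumption, through the Weyl tensor formula (4.1), replaces $R_1(X,Y)Z$ by the expression (4.2) in $S_1$, $Q_1$, $g_1$, and the scalar curvature $r$. Setting $Z = \xi_1$ and using the $N(k)$-contact identities (2.11), (2.12) together with $\eta_1(X) = g_1(X,\xi_1)$ from (2.6) reduces $g_1(R_1(X,Y)\xi_1, W)$ to a linear combination of $\eta_1(Y)g_1(X,W)$, $\eta_1(X)g_1(Y,W)$, $\eta_1(Y)S_1(X,W)$, and $\eta_1(X)S_1(Y,W)$ with coefficients $l_1 = \frac{1}{2n-1}\bigl(2nk - \frac{r}{2n}\bigr)$ and $l_2 = \frac{1}{2n-1}$.

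The key structural step is the observation (already implicit in (4.4)) that a conformally flat $N(k)$-contact metric manifold is $\eta$-Einstein, so $S_1(X,W) = a\,g_1(X,W) + b\,\eta_1(X)\eta_1(W)$ with $a = \frac{r}{2n} - k$ and $b = (2n+1)k - \frac{r}{2n}$. Substituting this into the linear combination above, the $b$-contributions cancel by antisymmetry in $X,Y$, and the arithmetic identity $l_1 + a l_2 = k$ causes the remaining four terms to collapse into $k[\eta_1(Y)g_1(X,W) - \eta_1(X)g_1(Y,W)]$, which by (2.12) is exactly $\eta_1(R_1(Y,X)W)$.

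Finally, on the $M_2$-side, I invoke (2.9) to write $R_1(X,Y)\xi_1 = k[\eta_1(Y)X - \eta_1(X)Y]$, and linearity of $f_*$ converts $g_2(f_*(R_1(X,Y)\xi_1), f_*(W))$ into $k[\eta_1(Y)g_2(f_*(X),f_*(W)) - \eta_1(X)g_2(f_*(Y),f_*(W))]$. This yields the claimed lower bound; running the same argument starting from the right-hand inequality of (3.1) produces the matching upper bound, and concatenating the two gives (4.13). The main obstacle is the coefficient bookkeeping: one must verify cleanly that the $\eta$-Einstein substitution kills the $b$-terms and that $l_1 + a l_2 = k$ holds exactly, since any miscount would prevent the collapse onto $\eta_1(R_1(Y,X)W)$ and would leave residual Ricci contributions that do not match the $N(k)$-form on the $M_2$-side.
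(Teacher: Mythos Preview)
Your proposal is correct and follows essentially the same route as the paper: specialize (3.1) to $(R_1(X,Y)Z,W)$, use conformal flatness to expand $R_1$ via (4.2), set $Z=\xi_1$, invoke the $\eta$-Einstein form of $S_1$ with the same $a,b$, exploit $l_1+al_2=k$ and the antisymmetric cancellation of the $b$-terms to reach $\eta_1(R_1(Y,X)W)$ on the $M_1$-side, and apply (2.9) with linearity of $f_*$ on the $M_2$-side. The paper carries out exactly these steps in (4.5)--(4.12), so your outline matches both in strategy and in the intermediate bookkeeping.
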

\begin{rem}
Now consider $f$ is a quasi-isometry between $M_1$ and $M_2$. Also consider $f_*(X)=Z_1$ and $f_*(Y)=Z_2$. Then there exists some $W \in \chi(M_1)$ such that $g_2(Z_1, f_*(W))\leq D$ and $g_2(Z_2, f_*(W))\leq D$, where $D\geq0$. So from \eqref{e4.11}, we get
\begin{equation*}
  \frac{1}{A}\eta_1(R_1(Y,X)W)-B\leq kD\eta_1(Y-X).
\end{equation*}
After a small calculation we can remark that,
\begin{equation*}
  R_1(Y,X)W\leq A(B\xi_1+kD(Y-X)).
\end{equation*}
\end{rem}
The following corollary can also be demonstrated:
\begin{cor}
Let $f$ be a quasi-isometric embedding between two $N(k)-$ contact metric manifolds $M_1(\phi_1,\xi_1,\eta_1,g_1)$ and $M_2(\phi_2,\xi_2,\eta_2,g_2)$ with $M_1$ conformally flat. If $f_*$ preserves the structure vector field, then for some $A\geq1$ and $B_1\geq0$ we have
\begin{equation}\label{e4.14}
-B_1\leq \eta_1(Y)g_2(f_*(X),\xi_2)-\eta_1(X)g_2(f_*(Y),\xi_2)\leq B_1.
\end{equation}
\end{cor}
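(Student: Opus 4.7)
The plan is to specialize inequality (4.13) of Theorem 4.1 to the particular case $W=\xi_1$. Since the corollary hypothesis gives $f_*(\xi_1)=\xi_2$, the terms $g_2(f_*(X),f_*(W))$ and $g_2(f_*(Y),f_*(W))$ appearing in the middle of (4.13) reduce exactly to $g_2(f_*(X),\xi_2)$ and $g_2(f_*(Y),\xi_2)$, which are the quantities featured in the conclusion of the corollary. So the main substantive step is to show that the curvature expression $\eta_1(R_1(Y,X)\xi_1)$ appearing on both ends of (4.13) vanishes, after which only the constants $\pm B$ remain.

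First I would invoke the defining property (2.10) of an $N(k)$-contact metric manifold to write
\begin{equation*}
R_1(Y,X)\xi_1 = k\bigl[\eta_1(X)Y-\eta_1(Y)X\bigr].
\end{equation*}
Applying $\eta_1$ to both sides yields
\begin{equation*}
\eta_1(R_1(Y,X)\xi_1) = k\bigl[\eta_1(X)\eta_1(Y)-\eta_1(Y)\eta_1(X)\bigr]=0.
\end{equation*}
Substituting $W=\xi_1$ in (4.13) and using this vanishing together with $f_*(\xi_1)=\xi_2$, the inequality collapses to
\begin{equation*}
-B \leq k\bigl[\eta_1(Y)g_2(f_*(X),\xi_2)-\eta_1(X)g_2(f_*(Y),\xi_2)\bigr] \leq B.
\end{equation*}

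Finally, assuming $k\neq 0$, I would divide through by $|k|$ and absorb the sign: setting $B_1:=B/|k|\geq 0$ produces the claimed two-sided bound. I would note in passing that the derivation of (4.11)--(4.12) required the conformal flatness of $M_1$ only to handle a general $Z$ in $R_1(X,Y)Z$; when specialized to $Z=\xi_1$, the relation (2.10) is already available on any $N(k)$-contact metric manifold, which is why the corollary can be stated without the conformal flatness hypothesis.

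The only real obstacle is the degenerate case $k=0$, for which the derivation above reduces to the tautology $-B\leq 0\leq B$ and yields no information on the expression $\eta_1(Y)g_2(f_*(X),\xi_2)-\eta_1(X)g_2(f_*(Y),\xi_2)$; this case would need to be addressed separately (for instance by observing that an $N(0)$-contact metric manifold has $R(X,Y)\xi=0$ and then exploiting (2.6) directly through the basic quasi-isometric bound (3.1)), or explicitly excluded from the statement.
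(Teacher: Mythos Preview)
Your proposal is correct and follows exactly the paper's own argument: set $W=\xi_1$ in (4.13) and use the $N(k)$ curvature identity (what the paper labels (2.9), not (2.10)) to kill $\eta_1(R_1(Y,X)\xi_1)$, then divide by $k$ to obtain $B_1=B/|k|$. Your additional remarks --- that conformal flatness is not actually needed once one specializes to $Z=\xi_1$, and that the case $k=0$ is degenerate --- go beyond what the paper states but are valid observations.
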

\begin{proof}
The proof of this corollary follows from the equation \eqref{e4.13} after putting $W=\xi_1$ and using \eqref{e2.9}.
\end{proof}\par

Next, we consider the manifold $M_1$ to be \emph{conformally flat Einstein manifold}, then its Ricci tensor $S_1$ satisfies  $S_1(X,Y)=\frac{r}{2n+1}g_1(X,Y)$. Now using this in \eqref{e4.8} we get
\begin{eqnarray}\label{e4.15}
\frac{1}{A}[l_1\{\eta_1(Y)g_1(X,W)-\eta_1(X)g_1(Y,W)\}+l_2\frac{r}{2n+1}\{\eta_1(Y)g_1(X,W)\nonumber\\
-\eta_1(X)g_1(Y,W)\}]-B\leq g_2(f_*(R_1(X,Y)\xi_1),f_*(W)).
\end{eqnarray}
Then after simplification this yields
\begin{equation*}
\frac{1}{Ak}(l_1+\frac{r}{2n+1}l_2)\eta_1(R_1(Y,X)W)-B\leq g_2(f_*(R_1(X,Y)\xi_1),f_*(W)).
\end{equation*}
Now considering $a_1=\frac{1}{k}(l_1+\frac{r}{2n+1}l_2)=\frac{1}{k(2n-1)}(2nk-\frac{r}{2n}+\frac{r}{2n+1})$, the above inequality transforms into
\begin{equation*}
\frac{a_1}{A}\eta_1((R_1(Y,X)W))-B\leq g_2(f_*(R_1(X,Y)\xi_1),f_*(W)).
\end{equation*}
Applying the linearity of $f_*$ the last inequality becomes
\begin{eqnarray}\label{e4.16}
&\frac{a_1}{A}\eta_1((R_1(Y,X)W))-B\leq k[\eta_1(Y)g_2(f_*(X),f_*(W))\nonumber\\
&-\eta_1(X)g_2(f_*(Y),f_*(W))].
\end{eqnarray}
Again proceeding similarly with the right side inequality, we have
\begin{eqnarray}\label{e4.17}
&k[\eta_1(Y)g_2(f_*(X),f_*(W))-\eta_1(X)g_2(f_*(Y),f_*(W))]\nonumber\\
&\leq a_1A\eta_1((R_1(Y,X)W))+B.
\end{eqnarray}
Hence, from \eqref{e4.16} and \eqref{e4.17}, we can state the following corollary
\begin{cor}
Let $f$ be a quasi-isometric embedding between two $N(k)-$ contact metric manifolds $M_1(\phi_1,\xi_1,\eta_1,g_1)$ and $M_2(\phi_2,\xi_2,\eta_2,g_2)$. Moreover, if the manifold $M_1$ be conformally flat Einstein manifold, then for some $A\geq1, B\geq0$ and for all $X,Y,W\in\chi(M_1)$ the metric $g_2$ of $M_2$ satisfies
\begin{eqnarray}\label{e4.18}
&\frac{a_1}{A}\eta_1((R_1(Y,X)W))-B\leq k[\eta_1(Y)g_2(f_*(X),f_*(W))\nonumber\\
&-\eta_1(X)g_2(f_*(Y),f_*(W))]\leq a_1A\eta_1((R_1(Y,X)W))+B,
\end{eqnarray}
where $R_1$ is the Riemannian curvature tensor of the manifold $M_1$ and $a_1=\frac{1}{k(2n-1)}(2nk-\frac{r}{2n}+\frac{r}{2n+1})$.
\end{cor}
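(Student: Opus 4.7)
The plan is to mirror the strategy of Theorem 4.1 (and its conformally flat setup), but now refine the intermediate step in which the Ricci tensor $S_1$ appears by invoking the Einstein condition. Concretely, I would apply the quasi-isometric embedding inequality (3.1) to the vectors $R_1(X,Y)\xi_1$ and $W$ in $\chi(M_1)$, and then exploit both the conformal flatness of $M_1$ and the $N(k)$-contact structure to rewrite the left-hand side in closed form.

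First I would invoke conformal flatness of $M_1$ to replace $R_1(X,Y)Z$ using (4.2), set $Z=\xi_1$, and use the $N(k)$-contact identities $S_1(X,\xi_1)=2nk\,\eta_1(X)$ and $g_1(X,\xi_1)=\eta_1(X)$ from Result 2.2 together with $g_1(Q_1X,Y)=S_1(X,Y)$. With the shorthand $l_1=\tfrac{1}{2n-1}\bigl(2nk-\tfrac{r}{2n}\bigr)$ and $l_2=\tfrac{1}{2n-1}$, this reproduces equation (4.8):
\begin{equation*}
\tfrac{1}{A}\bigl[l_1\{\eta_1(Y)g_1(X,W)-\eta_1(X)g_1(Y,W)\}+l_2\{\eta_1(Y)S_1(X,W)-\eta_1(X)S_1(Y,W)\}\bigr]-B\leq g_2(f_*(R_1(X,Y)\xi_1),f_*(W)).
\end{equation*}

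Next I would substitute the Einstein condition $S_1(X,Y)=\tfrac{r}{2n+1}g_1(X,Y)$ into this inequality; both bracketed terms become scalar multiples of $\eta_1(Y)g_1(X,W)-\eta_1(X)g_1(Y,W)$, which by (2.12) equals $\tfrac{1}{k}\,\eta_1(R_1(Y,X)W)$. Collecting constants gives exactly $a_1=\tfrac{1}{k(2n-1)}\bigl(2nk-\tfrac{r}{2n}+\tfrac{r}{2n+1}\bigr)$, so the lower bound reduces to $\tfrac{a_1}{A}\eta_1(R_1(Y,X)W)-B$. For the middle quantity, I would use linearity of $f_*$ together with $R_1(X,Y)\xi_1=k[\eta_1(Y)X-\eta_1(X)Y]$ (which follows from (2.9) with $Z=\xi_1$, or equivalently the defining property of $\xi_1\in N(k)$), to write $g_2(f_*(R_1(X,Y)\xi_1),f_*(W))=k[\eta_1(Y)g_2(f_*(X),f_*(W))-\eta_1(X)g_2(f_*(Y),f_*(W))]$, producing (4.16).

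Finally, I would repeat the same chain of substitutions starting from the right-hand inequality of (3.1), which only changes $\tfrac{1}{A}$ to $A$ and reverses the sign of $B$ on the outer bound, yielding (4.17). Combining (4.16) and (4.17) then gives the two-sided inequality in the statement. The only delicate point — and the one I would single out as the main bookkeeping obstacle — is verifying that the factor $a_1$ coming out of the combination $l_1+\tfrac{r}{2n+1}l_2$, after division by $k$, matches precisely the claimed expression; everything else is a mechanical application of the identities already invoked in Theorem 4.1.
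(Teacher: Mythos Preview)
Your proposal is correct and follows essentially the same route as the paper: you start from the conformally-flat $N(k)$ setup leading to (4.8), insert the Einstein condition $S_1=\tfrac{r}{2n+1}g_1$, use (2.12) to rewrite the bracket as $\tfrac{1}{k}\eta_1(R_1(Y,X)W)$, and then apply linearity of $f_*$ together with (2.9) to identify the middle term, exactly as the paper does in (4.15)--(4.17). The only slip is a harmless misattribution: (2.9) is already the identity $R_1(X,Y)\xi_1=k[\eta_1(Y)X-\eta_1(X)Y]$, so no substitution ``$Z=\xi_1$'' is needed there.
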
\par

The concircular curvature tensor of a manifold $(M^{2n+1},g)$ is given by
\begin{equation*}
\bar{C}(X,Y)Z=R(X,Y)Z-\frac{r}{2n(2n+1)}[g(Y,Z)X-g(X,Z)Y].
\end{equation*}
Now, if our ambient manifold $M_1$ be \emph{concircularly flat} i.e $\bar{C}(X,Y)Z=0$, then from above we have $$R_1(X,Y)Z=\frac{r}{2n(2n+1)}[g(Y,Z)X-g(X,Z)Y].$$ Putting this value in the left inequality of \eqref{e1.2}, we get
\begin{eqnarray*}
&\frac{r}{2An(2n+1)}[g_1(Y,Z)g_1(X,W)-g_1(X,Z)g_1(Y,W)]\\
&-B\leq g_2(f_*(R_1(X,Y)Z),f_*(W)).
\end{eqnarray*}
Then for $Z=\xi_1$ and using \eqref{e2.6}, \eqref{e2.12} with the linearity of $f_*$ and simplifying we have
\begin{equation}\label{e4.19}
\frac{b_1}{A}\eta_1(R_1(Y,X)W)-B\leq k[\eta_1(Y)g_2(f_*(X),f_*(W))-\eta_1(X)g_2(f_*(Y),f_*(W))],
\end{equation}
with $b_1=\frac{r}{2nk(2n+1)}$.\\
Again proceeding similarly as above, from the right side inequality of \eqref{e1.2}, we have
\begin{equation}\label{e4.20}
k[\eta_1(Y)g_2(f_*(X),f_*(W))-\eta_1(X)g_2(f_*(Y),f_*(W))]\leq b_1A\eta_1(R_1(Y,X)W)+B.
\end{equation}
So, combining \eqref{e4.19} and \eqref{e4.20} we get
\begin{thm}
Let $M_1(\phi_1,\xi_1,\eta_1,g_1)$ and $M_2(\phi_2,\xi_2,\eta_2,g_2)$ be two $N(k)-$ contact metric manifolds with $\operatorname{dim}M_1=2n+1$ $(n\geq1)$. Suppose $f:M_1\to M_2$ be a quasi-isometric embedding with the constants $A\geq1, B\geq0$. Moreover, if $M_1$ is concircularly flat, then for all $X,Y,W\in\chi(M_1)$ we have
\begin{eqnarray}\label{e4.21}
&\frac{b_1}{A}\eta_1(R_1(Y,X)W)-B\leq k[\eta_1(Y)g_2(f_*(X),f_*(W))\nonumber\\
&-\eta_1(X)g_2(f_*(Y),f_*(W))]\leq b_1A\eta_1(R_1(Y,X)W)+B,
\end{eqnarray}
where $b_1=\frac{r}{2nk(2n+1)}$.
\end{thm}\par

We have the conharmonic curvature tensor for a manifold $(M^{2n+1},g)$ given by,
\begin{eqnarray*}
&\Tilde{C}(X,Y)Z=R(X,Y)Z-\frac{1}{(2n-1)}[S(Y,Z)X-S(X,Z)Y\\
&+g(Y,Z)QX-g(X,Z)QY],
\end{eqnarray*}
where $Q$ is the \emph{Ricci operator} and is given by $g(QX,Y)=S(X,Y)$. Now consider the manifold $M_1$ \emph{conharmonically flat} i.e $\Tilde{C}(X,Y)Z=0$. Then using the value of $R_1(X,Y)Z$ from above, we get from the left side inequality of \eqref{e1.2},
\begin{eqnarray*}
&\frac{l_2}{A}[S_1(Y,Z)g_1(X,W)-S_1(X,Z)g_1(Y,W)+g_1(Y,Z)g_1(Q_1X,W)\\
&-g_1(X,Z)g_1(Q_1Y,W)]-B\leq g_2(f_*(R_1(X,Y)Z),f_*(W)).
\end{eqnarray*}
Then putting $Z=\xi_1$ and using \eqref{e2.6}, $g_1(X,\xi)=2nk\eta_1(X)$ and $S_1(Q_1X,Y)=S_1(X,Y)$, we get
\begin{eqnarray*}
&\frac{l_2}{A}[2nk\{\eta_1(Y)g_1(X,W)-\eta_1(X)g_1(Y,W)\}+[\eta_1(Y)S_1(X,W)\\
&-\eta_1(X)S_1(Y,W)]]-B\leq g_2(f_*(R_1(X,Y)\xi_1),f_*(W)).
\end{eqnarray*}
Moreover if $M_1$ is an Einstein manifold, then the above inequality becomes
\begin{eqnarray*}
&\frac{l_2}{A}(2nk+\frac{r}{2n+1})[\eta_1(Y)g_1(X,W)-\eta_1(X)g_1(Y,W)]\\
&-B\leq g_2(f_*(R_1(X,Y)\xi_1),f_*(W)).
\end{eqnarray*}
 Finally, using \eqref{e2.12} and the linearity of $f_*$, the above yields
\begin{equation}\label{e4.22}
\frac{c_1}{A}\eta_1(R_1(Y,X)W)-B\leq k[\eta_1(Y)g_2(f_*(X),f_*(W))-\eta_1(X)g_2(f_*(Y),f_*(W))].
\end{equation}
where, $c_1=\frac{l_2}{k}(2nk+\frac{r}{2n+1})$.\\
Similarly the right inequality of \eqref{e1.2} gives
\begin{equation}\label{e4.23}
k[\eta_1(Y)g_2(f_*(X),f_*(W))-\eta_1(X)g_2(f_*(Y),f_*(W))]\leq c_1A\eta_1(R_1(Y,X)W)+B.
\end{equation}
Therefore from the inequalities \eqref{e4.22} and \eqref{e4.23} we can state
\begin{thm}
Let $M_1(\phi_1,\xi_1,\eta_1,g_1)$ and $M_2(\phi_2,\xi_2,\eta_2,g_2)$ be two $N(k)-$ contact metric manifolds with $\operatorname{dim}M_1=2n+1$ $(n\geq1)$. Suppose $f:M_1\to M_2$ be the quasi-isometric embedding. Furthermore, if the manifold $M_1$ is conharmonically flat Einstein manifold, then for all $X,Y,W\in\chi(M_1)$, the metric $g_2$ of the manifold $M_2$ satisfies:
\begin{eqnarray}\label{e4.24}
\frac{c_1}{A}\eta_1(R_1(Y,X)W)-B\leq k[\eta_1(Y)g_2(f_*(X),f_*(W))\nonumber\\
-\eta_1(X)g_2(f_*(Y),f_*(W))] \leq c_1A\eta_1(R_1(Y,X)W)+B,
\end{eqnarray}
where, $c_1=\frac{l_2}{k}(2nk+\frac{r}{2n+1})=\frac{4n}{2n-1}$, since we have $k=\frac{r}{2n(2n-1)}$ for $N(k)-$ contact Einstein manifolds.
\end{thm}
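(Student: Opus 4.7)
The plan is to follow the template already used for Theorem 4.1 and Theorem 4.2: start with the flatness hypothesis on $M_1$, solve for the curvature tensor $R_1$, substitute into both halves of the quasi-isometric inequality (3.1), and shepherd the resulting expression into the form $\eta_1(R_1(Y,X)W)$ sandwiched between terms involving $g_2$ and $f_*$. Only two ingredients genuinely distinguish this case from the earlier ones: the use of the conharmonic curvature formula in place of the conformal one, and the Einstein hypothesis which collapses all Ricci terms to multiples of the metric.

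First I would substitute $\bar{C}(X,Y)Z=0$ into the definition of the conharmonic tensor to write
\[
R_1(X,Y)Z = l_2\bigl[S_1(Y,Z)X - S_1(X,Z)Y + g_1(Y,Z)Q_1X - g_1(X,Z)Q_1Y\bigr]
\]
with $l_2=\tfrac{1}{2n-1}$. Pairing this expression against $W$ with $g_1$ and applying the left half of (3.1) to $R_1(X,Y)Z\in\chi(M_1)$ gives a lower bound in terms of $g_2(f_*R_1(X,Y)Z,f_*W)$. Next I would set $Z=\xi_1$ and invoke the $N(k)$-contact identities $g_1(X,\xi_1)=\eta_1(X)$ and $S_1(X,\xi_1)=2nk\,\eta_1(X)$, together with $g_1(Q_1X,Y)=S_1(X,Y)$, to reduce the four Ricci/metric pairings to two surviving brackets of the form $\eta_1(Y)\,(\cdot)-\eta_1(X)\,(\cdot)$. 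The Einstein hypothesis $S_1=\tfrac{r}{2n+1}g_1$ then merges these into the single bracket $\eta_1(Y)g_1(X,W)-\eta_1(X)g_1(Y,W)$, which by equation (2.12) (with the arguments swapped) equals $\tfrac{1}{k}\eta_1(R_1(Y,X)W)$. Linearity of $f_*$ combined with the preservation of the characteristic vector field then yields (4.22); the same computation applied to the right half of (3.1) produces (4.23), and the two together form the claimed double inequality (4.24).

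The only subtle point is the bookkeeping that produces the constant $c_1=\tfrac{l_2}{k}\bigl(2nk+\tfrac{r}{2n+1}\bigr)$ and its final simplification. Here one uses the fact that, for an Einstein $N(k)$-contact metric manifold, tracing the Einstein identity $S_1(X,Y)=\tfrac{r}{2n+1}g_1(X,Y)$ against $\xi_1$ and matching with $S_1(X,\xi_1)=2nk\,\eta_1(X)$ rigidly relates the scalar curvature $r$ to the parameters $k$ and $n$. I expect the main obstacle to be this arithmetic consolidation rather than any curvature manipulation: once the Einstein/$N(k)$ compatibility relation is in hand, substituting back into $c_1$ and tidying the fractions gives the closed-form expression quoted in the statement.
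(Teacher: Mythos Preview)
Your proposal is essentially identical to the paper's own argument: start from conharmonic flatness to express $R_1$, feed it into the left and right halves of (3.1), specialise $Z=\xi_1$, reduce via the $N(k)$ identity $S_1(X,\xi_1)=2nk\,\eta_1(X)$ and the Einstein relation, recognise the bracket as $\tfrac{1}{k}\eta_1(R_1(Y,X)W)$ by (2.12), and expand the right-hand side using (2.9) together with linearity of $f_*$.

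One small correction: you do \emph{not} need the hypothesis that $f_*$ preserves the characteristic vector field, and the theorem as stated does not assume it. The passage from $g_2(f_*(R_1(X,Y)\xi_1),f_*(W))$ to $k[\eta_1(Y)g_2(f_*(X),f_*(W))-\eta_1(X)g_2(f_*(Y),f_*(W))]$ uses only equation (2.9), namely $R_1(X,Y)\xi_1=k[\eta_1(Y)X-\eta_1(X)Y]$, and the linearity of $f_*$; no $f_*(\xi_1)=\xi_2$ is required. Your identification of the constant $c_1$ via the compatibility $2nk=\tfrac{r}{2n+1}$ (i.e.\ $k=\tfrac{r}{2n(2n+1)}$) is the right computation and gives $c_1=\tfrac{4n}{2n-1}$; the relation printed in the statement as $k=\tfrac{r}{2n(2n-1)}$ appears to be a typographical slip.
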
\par

Recall that the Weyl projective curvature tensor $P$ of type $(1,3)$ on a Riemannian manifold $(M^{2n+1},g)$ can be defined as
\begin{equation*}
P(X,Y)Z=R(X,Y)Z-\frac{1}{2n}[S(Y,Z)X-S(X,Z)Y].
\end{equation*}
In a similar calculation, if $M_1$ is \emph{projectively flat} $N(k)-$contact Einstein manifold, i.e. if $P_1=0$ and $S_1(X,Y)=\frac{r}{2n+1}g_1(X,Y)$, then we write
\begin{thm}
Let $M_1(\phi_1,\xi_1,\eta_1,g_1)$ and $M_2(\phi_2,\xi_2,\eta_2,g_2)$ be two quasi- isometrically embedded $N(k)-$contact metric manifolds with $\operatorname{dim}M_1=2n+1)$ $(n\geq1)$. Suppose $fM_1\to M_2$ be such embedding between $M_1$ and $M_2$ with the constants $A\geq1, B\geq0$. Furthermore, if the manifold $M_1$ is projectively flat Einstein manifold, then we have
\begin{eqnarray}\label{e4.25}
\frac{1}{A}\eta_1(R_1(Y,X)W)-B\leq k[\eta_1(Y)g_2(f_*(X),f_*(W))\nonumber\\
-\eta_1(X)g_2(f_*(Y),f_*(W))] \leq A\eta_1(R_1(Y,X)W)+B.
\end{eqnarray}
\end{thm}

\section{Quasi-isometry between two Sasakian manifolds}
Some basic introductory details about the Sasakian manifold is given in the preliminary section. Now we recall an important theorem to establish the rest of the results.
\begin{thm}
\cite{Bla} A $N(k)-$contact metric manifold is Sasakian if and only if $k=1$.
\end{thm}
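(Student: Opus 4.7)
The plan is to attack the biconditional by isolating the curvature-at-$\xi$ identity $R(X,Y)\xi = \eta(Y)X - \eta(X)Y$ as the common pivot. Both the $N(k)$-contact metric condition (via equation $(2.10)$) and the Sasakian structure (via equation $(2.14)$ in Result 2.3) force a specific form of $R(X,Y)\xi$, so matching them should produce the value $k=1$.

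For the forward direction (Sasakian $\Rightarrow k=1$), I would start from Result 2.3, equation $(2.14)$, which gives $R(X,Y)\xi = \eta(Y)X - \eta(X)Y$ in any Sasakian manifold. Comparing this with the defining relation $R(X,Y)\xi = k[\eta(Y)X - \eta(X)Y]$ of a $N(k)$-contact metric manifold, and picking $X,Y$ for which $\eta(Y)X - \eta(X)Y \neq 0$ (e.g. $X$ any unit vector orthogonal to $\xi$ and $Y=\xi$, giving $\eta(Y)X - \eta(X)Y = X \neq 0$), forces $k = 1$, so $\xi \in N(1)$.

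For the converse ($k=1 \Rightarrow$ Sasakian), the plan is to substitute $k=1$ into $(2.10)$ to obtain $R(X,Y)\xi = \eta(Y)X - \eta(X)Y$ and then invoke the classical characterization (Blair \cite{Bla}): a contact metric manifold is Sasakian if and only if $R(X,Y)\xi = \eta(Y)X - \eta(X)Y$ for all $X,Y$. Once that characterization is cited, the reverse implication is immediate. I would mention the underlying idea behind the characterization -- namely that in a contact metric manifold one has $\nabla_X \xi = -\phi X - \phi h X$ where $h = \tfrac{1}{2}\mathcal{L}_\xi \phi$, and the curvature condition $R(X,Y)\xi = \eta(Y)X - \eta(X)Y$ forces $h = 0$, which in turn yields $(\nabla_X \phi)Y = g(X,Y)\xi - \eta(Y) X$ (Theorem on Sasakian structure).

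The main obstacle is really only the converse: the content of the ``if'' direction lies entirely in the classical contact-metric characterization of Sasakian structures, which is a nontrivial computation involving the tensor $h$ and the associated identities among $\phi$, $\xi$, $\eta$, and $\nabla$. Since this is quoted from Blair's monograph (already referenced in the preliminaries), the cleanest write-up is to cite \cite{Bla} for that equivalence rather than reproduce the $h$-calculation, and present the two-line matching argument above explicitly.
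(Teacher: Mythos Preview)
The paper does not give its own proof of this theorem: it is quoted verbatim from Blair's monograph \cite{Bla} and used as a black box to pass from the $N(k)$-results of Section~4 to the Sasakian results of Section~5. So there is no in-paper argument to compare your proposal against.

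That said, your sketch is the standard route and is correct. The forward direction is exactly the two-line curvature matching you describe, and the converse is precisely the classical characterization (a contact metric manifold with $R(X,Y)\xi=\eta(Y)X-\eta(X)Y$ is Sasakian, equivalently $h=\tfrac{1}{2}\mathcal{L}_\xi\phi=0$) proved in \cite{Bla}. Your plan to cite Blair for that step rather than redo the $h$-computation matches both the spirit and the letter of how the paper handles this result.
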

Using this theorem we can imply the following result from the previous results for $N(k)-$contact metric manifold.
\begin{thm}
Let $M_1(\phi_1,\xi_1,\eta_1,g_1)$ and $M_2(\phi_2,\xi_2,\eta_2,g_2)$ be two Sasakian manifolds with dimension of $M_1=2n+1$ $(n\geq1)$. Let  $f:M_1\to M_2$ be a quasi-isometry embedding between $M_1$ and $M_2$ with constants $A\geq1, B\geq0$. Then the following inequalities hold in the respective following cases:
\begin{enumerate}
    \item If $M_1$ is conformally flat or conformally flat Einstein or concircularly flat or projectively flat manifold, then
        \begin{eqnarray}\label{e5.1}
        \frac{1}{A}\eta_1(R_1(Y,X)W)-B\leq \eta_1(Y)g_2(f_*(X),f_*(W)) \nonumber\\
        -\eta_1(X)g_2(f_*(Y),f_*(W)) \leq A\eta_1(R_1(Y,X)W)+B.
        \end{eqnarray}
    \item If $M_1$ is conharmonically flat Einstein manifold, then
        \begin{eqnarray}\label{e5.2}
        \frac{c_1}{A}\eta_1(R_1(Y,X)W)-B\leq \eta_1(Y)g_2(f_*(X),f_*(W)) \nonumber\\
        -\eta_1(X)g_2(f_*(Y),f_*(W)) \leq c_1 A\eta_1(R_1(Y,X)W)+B,
        \end{eqnarray}
        where $c_1=\frac{4n}{2n-1}$.
\end{enumerate}
\end{thm}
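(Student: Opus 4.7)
The plan is to invoke Theorem 5.1 to treat each Sasakian manifold as an $N(k)$-contact metric manifold with $k=1$, and then quote the inequalities already established in Section~4 (Theorem 4.1, Corollary 4.2, and Theorems 4.3, 4.4, 4.5). Under the substitution $k=1$ the factor $k$ in front of the bracket in each of those earlier inequalities is simply absorbed into the middle term, so the only thing left to check is that the auxiliary constants $a_1$, $b_1$, $c_1$ appearing in those statements collapse to the values demanded by (5.1) and (5.2) once the extra Sasakian structure is used.

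For Part (1) I would dispatch the four flatness hypotheses in turn. The conformally flat case and the projectively flat case are immediate: Theorem 4.1 and Theorem 4.5 already produce the inequality with coefficient $1$, and putting $k=1$ in each yields (5.1) directly. For the concircularly flat case, Theorem 4.3 gives the inequality with coefficient $b_1 = \frac{r}{2nk(2n+1)}$, so I would pin down $r$. Concircular flatness means $R_1(X,Y)Z = \frac{r}{2n(2n+1)}[g_1(Y,Z)X - g_1(X,Z)Y]$; setting $Z=\xi_1$ and comparing with the Sasakian identity (2.15), $R_1(X,Y)\xi_1 = \eta_1(Y)X - \eta_1(X)Y$, forces $r = 2n(2n+1)$. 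Substituting this together with $k=1$ gives $b_1 = 1$. For the conformally flat Einstein case, Corollary 4.2 gives the inequality with coefficient $a_1 = \frac{1}{k(2n-1)}(2nk - \frac{r}{2n} + \frac{r}{2n+1})$. The Einstein condition $S_1 = \frac{r}{2n+1}g_1$ together with the Sasakian relation (which is (2.11) at $k=1$) $S_1(X,\xi_1) = 2n\eta_1(X)$ forces $\frac{r}{2n+1} = 2n$, i.e.\ $r = 2n(2n+1)$, and then $a_1 = \frac{1}{2n-1}(2n - (2n+1) + 2n) = 1$. This settles (5.1).

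For Part (2), Theorem 4.4 gives the inequality with coefficient $c_1 = \frac{1}{k(2n-1)}(2nk + \frac{r}{2n+1})$. The scalar-curvature computation from the previous paragraph applies verbatim (Sasakian Einstein again gives $r = 2n(2n+1)$, $k=1$), and a direct substitution yields $c_1 = \frac{1}{2n-1}(2n + 2n) = \frac{4n}{2n-1}$, which is precisely the constant in (5.2).

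The main obstacle, and really the only non-clerical step, is pinning down the scalar curvature of a Sasakian Einstein (respectively, of a concircularly flat Sasakian) manifold to be $r = 2n(2n+1)$. Once this is accomplished via the Sasakian curvature identity (2.15) and the relation (2.11) with $k=1$, the three constants $a_1$, $b_1$, $c_1$ collapse as required and the two inequalities in the statement follow at once from the corresponding $N(k)$-contact results, with no further structural argument needed.
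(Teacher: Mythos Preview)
Your approach is correct and matches the paper's: the paper offers essentially no proof beyond the sentence ``Using this theorem we can imply the following result from the previous results for $N(k)$-contact metric manifold,'' i.e.\ set $k=1$ via Theorem~5.1 and read off the Section~4 inequalities. You actually supply the step the paper leaves implicit, namely the verification that $a_1$ and $b_1$ collapse to $1$ by pinning down $r=2n(2n+1)$ from the Sasakian curvature identities; this is exactly right. Two minor remarks: your internal reference numbers are shifted (the conformally flat Einstein, concircularly flat, conharmonically flat Einstein, and projectively flat Einstein results are Corollary~4.4 and Theorems~4.5, 4.6, 4.7 in the paper), and for Part~(2) the simplification $c_1=\tfrac{4n}{2n-1}$ is already carried out in Theorem~4.6 at the $N(k)$-level, so no extra Sasakian computation is needed there.
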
 \par

\textbf{Example:}
Consider $M_1=\mathbb{R}^3$ with the Euclidean metric $g_1$. Let $\alpha =\frac{1}{2}(dz-ydx)$, $\xi =\frac{\partial}{\partial z}$ and $g_1=\alpha\otimes\alpha +\frac{1}{4}(dx^2+dy^2)$. Then we take $e_1=\frac{\partial}{\partial x}$, $e_2=\frac{\partial}{\partial y}$ and $e_3=\frac{\partial}{\partial z}$ as a set of linearly independent basis vectors for  the set of vector fields $\chi(M_1)$ of the manifold $M_1$. Also consider the $(1,1)$ tensor field $\phi$ be given as, $\phi_1(\frac{\partial}{\partial x})=\frac{\partial}{\partial y}+x\frac{\partial}{\partial z}$, $\phi_1(\frac{\partial}{\partial y})=-\frac{\partial}{\partial x}$ and $\phi_1(\frac{\partial}{\partial z})=0$. Then it can be easily checked that the manifold $(M_1,g_1)$ with the above defined structure is a Sasakian manifold.\par
Take another manifold $M_2=$\{$(x,y,z)\in\mathbb{R}^3: 1< y< 2,z\neq 0$\}, where $(x,y,z)$ are the standard co-ordinates of $\mathbb{R}^3$. Then the linearly independent vector fields are given by $f_1=\frac{\partial}{\partial y}$, $f_2=z^2(\frac{\partial}{\partial z}+2y\frac{\partial}{\partial x})$ and $f_3=\frac{\partial}{\partial x}$. Let $g_2$ be the Riemannian metric defined by: $g_{ij}=1$ for $i=j$ and $g_{ij}=0$ for $i\neq j$. Let $\phi$ be the $(1,1)$ tensor field defined by; $\phi_2 (f_1)=f_3$, $\phi_2 (f_2)=0$ and $\phi_2 (f_3)=-f_1$. Thus for taking $\xi =f_2$, we can show that the manifold $(M_2,g_2)$ with this structure is a Sasakian manifold.\par
Now we define a map $f_*:\chi(M_1)\longrightarrow\chi(M_2)$ on the basis vector fields by,
\begin{equation*}
  f_*(e_1)=\frac{1}{2}(yf_3+\frac{1}{\sqrt{y}}f_1), ~~f_*(e_2)=\frac{1}{2}f_2, ~~f_*(e_3)=-\frac{1}{2}f_3.
\end{equation*}
The $f$ is a quasi-isometry between the two Sasakian manifolds $M_1$ and $M_2$ with the constants $A=2$ and $B=1$.

\vspace{0.2cm}
\section{Quasi-isometric inequality between two Riemannian manifolds}
We will conclude this article with the following result. This theorem concerns between any two Riemannian manifold which have a quasi-isometric structure among them.
\begin{thm}
Let $(M_1, g_1)$ and $(M_2, g_2)$ be two Riemannian manifolds of same dimension $n$ and let $f$ be the quasi-isometric embedding between them with some constants $A\geq1$ and $B\geq0$. Then $$\frac{r_1}{A}-n^2B\leq g_2(f_*(R_1(e_i,e_j)e_j),f_*(e_i))\leq Ar_1+n^2B,$$ $r_1$ being the scalar curvature of the manifold $M_1$.
\end{thm}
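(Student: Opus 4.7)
The plan is to unpack the statement using the Einstein summation convention and then apply the defining quasi-isometric inequality (3.1) term by term. First I would fix a point $p \in M_1$ and choose a local $g_1$-orthonormal frame $\{e_1, \ldots, e_n\}$ near $p$, so that the scalar curvature admits the standard expression
\begin{equation*}
r_1 \;=\; \sum_{i,j=1}^n g_1\bigl(R_1(e_i,e_j)e_j,\,e_i\bigr).
\end{equation*}
The inequality to be proved should therefore be read with the indices $i,j$ summed from $1$ to $n$ on the middle expression as well.

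Next I would apply the quasi-isometric inequality (3.1) with the specific choice $X = R_1(e_i,e_j)e_j \in \chi(M_1)$ and $Y = e_i \in \chi(M_1)$, which gives, for each fixed pair $(i,j)$,
\begin{equation*}
\tfrac{1}{A}\,g_1\bigl(R_1(e_i,e_j)e_j,\,e_i\bigr) - B \;\leq\; g_2\bigl(f_*(R_1(e_i,e_j)e_j),\,f_*(e_i)\bigr) \;\leq\; A\,g_1\bigl(R_1(e_i,e_j)e_j,\,e_i\bigr) + B.
\end{equation*}
Summing all three sides over $i,j = 1,\ldots,n$ produces $n^2$ copies of the constant $B$ on each outer bound, while the $g_1$-terms collapse to $r_1$ by the formula above. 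This yields exactly
\begin{equation*}
\tfrac{r_1}{A} - n^2 B \;\leq\; \sum_{i,j} g_2\bigl(f_*(R_1(e_i,e_j)e_j),\,f_*(e_i)\bigr) \;\leq\; A\,r_1 + n^2 B,
\end{equation*}
which is the stated conclusion under the convention that repeated indices are summed.

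There is essentially no hard step here; the only thing to be careful about is that the quasi-isometric inequality (3.1) is pointwise in the arguments (and requires no compatibility with the structure vector field, which does not appear in this statement), and that the choice of orthonormal frame is only used to identify $r_1$ with the double trace of the Riemannian curvature tensor. Since the left- and right-hand bounds of the final inequality are frame-independent (they involve only the scalar $r_1$ and the universal constants $A,B,n$), the middle sum must also be frame-independent, which is consistent with its invariant meaning as a trace of the bilinear form $(X,Y) \mapsto g_2(f_*(R_1(X,\cdot)\cdot), f_*(Y))$ on $T_pM_1$. The only mild issue worth noting is a justification that the summation convention is indeed intended in the statement, since without it the inequality would be meaningless for a generic pair $(i,j)$.
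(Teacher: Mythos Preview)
Your proof is correct and follows essentially the same approach as the paper: apply the quasi-isometric inequality (3.1) with $X=R_1(e_i,e_j)e_j$ and $Y=e_i$ and sum over the orthonormal frame. The only cosmetic difference is that the paper performs the two contractions in two separate steps (first $X=W=e_i$ to reach $S_1(Y,Z)$ and pick up $nB$, then $Y=Z=e_j$ to reach $r_1$ and $n^2B$), whereas you sum over both indices simultaneously; the content is identical.
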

\begin{proof}
For all $X$, $Y$, $Z$ and $W$ in $\chi(M_1)$, $R_1(X,Y)Z$ is also in $\chi(M_1)$ and for $f$ being the quasi-isometry between $M_1$ and $M_2$, we get
\begin{eqnarray}\label{e6.1}
 \frac{1}{A}g_1(R_1(X,Y)Z,W)-B \leq &g_2(f_*(R_1(X,Y)Z),f_*(W))\nonumber\\
\leq &Ag_1(R_1(X,Y)Z,W)+B.
\end{eqnarray}
Let $\{e_i\}$ be an orthonormal basis of the tangent space $T_p(M_1)$ at $p\in M_1$. Then for $X=W=e_i$, we get from the left inequality of \eqref{e6.1},
\begin{equation*}
\frac{1}{A}S_1(Y,Z)-nB \leq g_2(f_*(R_1(e_i,Y)Z),f_*(e_i)).
\end{equation*}
Again putting $Y=Z=e_j$ we get
\begin{equation}\label{e6.2}
\frac{1}{A}r-n^2B \leq g_2(f_*(R_1(e_i,e_j)e_j),f_*(e_i)).
\end{equation}
Similarly, right inequality gives
\begin{equation}\label{e6.3}
  g_2(f_*(R_1(e_i,e_j)e_j),f_*(e_i))\leq Ar_1+n^2B.
\end{equation}
 Finally \eqref{e6.2} and \eqref{e6.3} together complete the proof.
\end{proof}

\textbf{Author contributions:} All authors contributed equally to this project.\\

\textbf{Conflicts of Interest:} The authors declare no conflict of interest.


\end{document}